\newtheorem{thm}{Theorem}
\newtheorem{lem}{Lemma}
\newtheorem{cor}{Corollary}
\newcommand{\sabs}[1]{\left|#1\right|}
\newcommand{\sparen}[1]{\left(#1\right)}
\newcommand{\norm}[1]{\sabs{\sabs{#1}}}
\numberwithin{equation}{section}
\title{Low Regularity Ray Tracing for Wave Equations with Gaussian beams}
\author{Alden Waters\thanks{amswaters@rug.nl, Bernoulli Institute, University of Groningen
Faculty of Science and Engineering (FSE)}} 
\begin{document}
\maketitle

\begin{abstract}
We prove observability estimates for oscillatory Cauchy data modulo a small kernel for $n$-dimensional wave equations with space and time dependent $C^2$ and $C^{1,1}$ coefficients using Gaussian beams. We assume the domains and observability regions are in $\mathbb{R}^n$, and the GCC applies. This work generalizes previous observability estimates to higher dimensions and time dependent coefficients. The construction for the Gaussian beamlets solving $C^{1,1}$ wave equations represents an improvement and simplification over Waters (2011). 
\end{abstract}

\textbf{keywords:} control theory, inverse problems, wave equations, observability, low regularity coefficients, Gaussian beams\\

\textbf{MSC classes:} 35R01, 35R30, 35L20, 58J45, 35A22

\section{Introduction} 

Significant interest in inverse problems and imaging centers around low-regularity wave equations. Often times in nature equations governing physical situations can be irregular and discontinuities are difficult to model. Ray tracing techniques used in geophysics often require $C^{3}$ coefficients to model the physical phenomenon. These techniques have been popular with the inverse problems community \cite{AS, Isakov, Isakov2,Rakesh, Rakesh2, SU1, SU2}.  The goal of this article is to introduce a construction which generalizes the Gaussian beam Ansatz and is suitable for proving multi-dimensional observability estimates for low regularity equations. The relationship of these observability estimates to hyperbolic inverse boundary value problems is clear. One can 'observe' or recover a source from following the geometric optics rays from the observation back to the source. This technique is known as ray-tracing. Our results indicate that in some cases, for practical applications imaging is still possible. The main theorems extend the observability estimates of \cite{fz} both in dimension and generality of the geometry. We use the observability criterion of Bardos-Lebeau-Rauch, \cite{blr} and a non-gliding hypothesis on the rays.

Specifically, we prove in this paper observability estimates for $n$-dimensional classical wave equations with $C^{1,1}$ and $C^2$ time dependent coefficients. Whenever the coefficients are in these low regularity classes we establish observability estimates even though the typical assumption is $C^3$ coefficients. We will give a precise idea of why this is true when specifying the error estimates for the construction of the approximate solution, or Ansatz solution. This appears to be the first case in which space and time dependent coefficients are also examined in the context of observability estimates. 

These estimates are proved for one-dimensional wave equations by means of a uniquely one-dimensional technique \cite{fz} called sidewise energy estimates where the use of space and time are interchanged. We use same idea that the use of time and arc-length are interchanged which was developed in \cite{waterss}. This development seems like a natural generalization of sidewise energy estimates. 

In higher space dimensions the problem is more complex and other techniques are required. Bardos-Lebeau-Rauch \cite{blr} proved it is necessary and sufficient for observability estimates to hold for wave equation solutions if the observability region satisfies the geometric control condition.

For this article we assume $\Omega$ is a domain in $\mathbb{R}^n$, and $\Omega_0\subset\Omega$ is the observation region. In general the work of Lebeau showed that control from $\Omega_0\subset\Omega$ (the dual statement to the existence of the observability constants for the wave equation)  holds for the wave equation under the Geometric Control Condition (GCC):

$\bullet$ There exists $L=L(\Omega,\Omega_0)>0$ such that every Hamiltonian ray path of length $L$ on $\Omega$ intersects $\Omega_0$. \\

In one dimension, rays can only travel in a sidewise manner, so this condition is automatic to verify. However for the higher dimensional case the development of more complex Ansatzes to trace the ray path in space-time involves $C^2$ \cite{Ralston} or at minimum $C^{1,1}$ \cite{watersp} coefficients. The main novelty of this paper is an Ansatz which does not require the usual $C^3$ coefficients necessary for the construction of the localized tail. 
 
 For the system in question, observability estimates are known to be verified in 1d when the coefficients are in bounded variation class, see \cite{fzbv}. Observability estimates with a loss of derivatives continue to hold for coefficients in log-Zygmund spaces \cite{fz}, but fail for $C^\alpha$ \cite{cz} with $\alpha<1$. In higher dimensions, Burq extended the observability estimates to $C^2$ time independent  coefficients and $C^3$ domains \cite{burq}. For this work, we concentrate on results in the positive direction, and use an equivalent hypothesis on the geometry of the rays as in \cite{burq}. In contrast to previous works, we assume that Cauchy data has an oscillatory phase part. This assumption could be removed by using the frame in \cite{watersp}, and this will be reported on in future work. 
 
\section{Statement of the Main Theorems}

We recall the definition of $C^{1,1}$ coefficients. We say the collection $\{g^{jk}(x,t)\}_{j,k=1}^n$ with $(x,t)\in \Omega\times [0,T]$ is  $C^{1,1}$ if the coefficients satisfy a Lipschitz condition in $x$ and $t$ 
\[
|g^{jk}(x,t)-g^{jk}(x',t')|\leq M(|t-t'|+|x-x'|)
\]
and their first derivatives in $x$ satisfy a Lipschitz condition 
\[
|\nabla_xg^{jk}(x,t)-\nabla_xg^{jk}(x',t)|\leq M_0|x-x'|
\]
for some positive constants $M, M_0$ independent of $x$ and $t$. In general we assume, the coefficient $g^{jk}(x,t)$ is variable over the set $\Omega\times [0,T]$, and equal to $\delta^{jk}$ outside a much larger set. 
 
Now we introduce our operator. Let $H(x,t,\partial_x,\partial_t)$ be a second order hyperbolic operator of the form:
\begin{align}\label{op}
&H(x,t,\partial_x,\partial_t)=
-\frac{\partial^2}{\partial t^2}+\sum\limits_{j,k=1}^ng^{jk}(x,t)\frac{\partial^2}{\partial x_j\partial x_k}.
\end{align}
We assume that there exists a constant $\alpha_0$ such that 
\begin{align}
\sum\limits_{j,k=1}^n g^{jk}(x,t)\xi_i\xi_j\geq \alpha_0|\xi|^2
\end{align}
with $g^{jk}(x,t)=\delta^{jk}$ on $\partial\Omega\times [0,T]$. Sometimes we will assume the coefficients are $g^{jk}(x,t)\in C^2(\Omega\times [0,T])$, or $C^{1,1}(\Omega\times [0,T])$, and we will specify the regularity as needed. 

For $x\in\Omega$ and $\omega\in T_x\Omega$ we denote initial conditions of the system of ODEs \eqref{bich}. We let 
\begin{align}
\mathcal{S}\Omega=\{(x,\omega)\in T\Omega;\quad |\omega|=1\}
\end{align}
denote the sphere bundle of $\Omega$. We consider pairs $(x_0,\omega_0)$ for which the bicharacteristic flow \eqref{bich} (instead of the geodesic flow) satisfies the GCC. However, it can be shown that using the coefficients $g^{jk}$ to endow $\Omega$ with a metric topology so that $(g,\Omega)$ is a manifold, that bicharateristic flow and the geodesic flow on the manifold are equivalent, c.f. the Appendix \cite{waterss} for this computation. 

We define a phase function $\psi_0(x)$ as:
\begin{align}
\psi_0(x)=(x-x_0)\cdot\omega_0+i|x-x_0|^2
\end{align}
and consider $\lambda\in\mathbb{R}^+$ a large asymptotic parameter. The vectors $(x_0,\omega_0)\in \mathbb{R}^n$ are in the admissible set defined above. 

We consider the Cauchy problem with Dirichlet boundary conditions:
\begin{align}\label{cauchy}
&Hu(x,t)=0 \quad \textrm{in} \quad \Omega\times\mathbb{R}_t^+\quad\\& \nonumber
u_0=u(x,0)=\lambda^{n/4}f_1(x)\exp(i\lambda\psi_0) \qquad u_1=\frac{\partial u}{\partial t}(x,0)=\lambda^{n/4}f_2(x)\exp(i\lambda\psi_0)\quad \textrm{in}\quad \Omega \\& \nonumber
u(x,t)=0 \quad \textrm{on}\quad \partial\Omega\times\mathbb{R}_t^+
\end{align}
where each function $f_k(x)\in C^3(\Omega)$ $k=1,2$ ($H^3$ regularity is also possible with slightly more work). It is well known under these conditions that this problem is well-posed for some finite time $T$, with $u\in C([0,T];\dot{H}_0^1(\Omega))\cap C^1([0,T];L^2(\Omega))$, c.f. Theorem \ref{energy} below.  

Let $\Omega_0\subset\Omega$ be such that $\Omega_0$ satisfies the geometric control condition. Let $\Omega\subset\Omega'$ with $\Omega'$ a larger domain. We let the constant $C_{12}$ be such that $C_{12}$ depends on $T, \mathrm{diam}_g(\Omega), \alpha_0$ and $||g^{jk}(x,t)||_{C^2(\Omega\times[0,T])}$ only. We also let $C_{22}$ be such that it depends on $||f_1||_{C^3(\Omega)}+||f_2||_{C^2(\Omega)},\mathrm{diam}_{g'}(\Omega')$, $\alpha_0$, $||g^{jk}(x,t)||_{C^2(\Omega\times[0,T])}$, and $T$. 

We have the following theorem, provided the set $\Omega_0\subset\Omega$ satisfies the GCC.

\begin{thm}\label{low1Rn}
Assume that $g^{jk}(x,t)$ are $C^2(\Omega\times[0,T])$, then for every $\epsilon>0$ there exists nonzero constants $C_{12}$ and $C_{22}$ as above such that for all $\lambda$ sufficiently large:
\begin{align}
||u_0||_{L^2(\Omega)}+||u_1||_{H^{-1}(\Omega)}\leq C_{12}||u||_{L^2(\Omega_0\times [0,T])}+\frac{C_{22}}{\lambda^{1/3}}+C_{22}C_2(\epsilon)
\end{align}
where $C_2(\epsilon)$ is a positive constant depending on $\epsilon$ such that $C_2(\epsilon)\rightarrow 0$ as $\epsilon\rightarrow 0$.
\end{thm}

\begin{cor}
As $\epsilon\rightarrow 0$ and $\lambda\rightarrow\infty$, this gives a perfect observability estimate:
\begin{align}
||u_0||_{L^2(\Omega)}+||u_1||_{H^{-1}(\Omega)}\leq C_{12}||u||_{L^2(\Omega_0\times [0,T])}
\end{align}
\end{cor}

We let the constant $C_{11}$ be such that $C_{11}$ depends on $T, \mathrm{diam}_g(\Omega), \alpha_0$, and $||g^{jk}(x,t)||_{C^{1,1}(\Omega\times[0,T])}$ only. We also let $C_{21}$ be such that it depends on $||f_1||_{C^3(\Omega)}+||f_2||_{C^2(\Omega)},\mathrm{diam}_{g'}(\Omega')$, $\alpha_0$, $||g^{jk}(x,t)||_{C^{1,1}(\Omega\times[0,T])}$ and $T$.

\begin{thm}\label{low1Rn2}
Assume that $g^{jk}(x,t)$ are $C^{1,1}(\Omega\times [0,T])$. For every $\epsilon>0$, there exists constants $C_{11}$ and $C_{21}$ such that for all $\lambda$ sufficiently large
\begin{align}
||u_0||_{L^2(\Omega)}+||u_1||_{H^{-1}(\Omega)}\leq C_{11}||u||_{L^2(\Omega_0\times[0,T])}+\frac{C_{21}}{\lambda^{1/3}}+C_1(M_0)
\end{align}
where $C_1(M_0)$ is a positive constant depending on $M_0$ such that $C_1(M_0)\rightarrow 0$ as $M_0\rightarrow 0$.
\end{thm}

\begin{cor}
As $M_0\rightarrow 0$, and $\lambda\rightarrow\infty$ this gives a perfect observability estimate:
\begin{align}
||u_0||_{L^2(\Omega)}+||u_1||_{H^{-1}(\Omega)}\leq C_{11}||u||_{L^2(\Omega_0\times [0,T])}
\end{align}
\end{cor}

Even though the second theorem requires less regularity, we show both Theorems for completeness because the computations will indicate that the higher the regularity the sharper the estimates. Moreover, if the modulus of continuity for the derivative of the metric is too large in the case of $C^{1,1}$ coefficients, estimates are not possible. In the second corollary, the equation is close to $\partial^2_tu-\Delta u=0$, for which the beamlets have a phase function solving the equation exactly. 

\section{Ansatz Construction}

 If the reader is expecting a typical Ansatz to the wave equation, that will not be found here as $C^3$ coefficients are necessary for easy control over the error terms, thus increasing the complexity of the construction. We modify the existing construction for Gaussian beams to low regularity coefficients. Gaussian beams are asymptotically valid high frequency solutions to hyperbolic differential equations which are concentrated on a single physical curve in the domain. One can extend them to dispersive equations such as the Schrodinger equation. Superpositions of Gaussian beams also provide a powerful tool to generate more general high frequency solutions which are not necessarily concentrated on a single curve. We present a systematic construction of Gaussian beam super-positions for the wave equation, with the caveat that we have extremely low regularity coefficients. Geometric optics breaks down at caustics where the rays concentrate and the predicted amplitude is unbounded. The consideration of such difficulties which are caused by caustics starting with Keller \cite{keller} and then Maslov and Fedoriuk \cite{mf} which leads to the development of Fourier integral operators given by Hormander \cite{hormander}. As such, we follow the parametrix construction in \cite{eskin} Section 64 losely, which is based on this Fourier integral operator construction. The major differences are that we have to be more careful about the treatment of our error terms, and we continue the construction to include the Gaussian beam tails. 

We let $H_0$ be the principal symbol of the operator \eqref{op}. One can factor the symbol as 
\begin{align}
H_0(x,t,\xi,\tau)=(\tau-\lambda_1(x,t,\xi))(\tau-\lambda_2(x,t,\xi))
\end{align}
where the roots are 
\begin{align}
\lambda_1=\sqrt{\sum\limits_{j,k=1}^ng^{jk}(x,t)\xi_j\xi_k}\quad \lambda_2=-\sqrt{\sum\limits_{j,k=1}^ng^{jk}(x,t)\xi_j\xi_k}.
\end{align}

We now proceed to construct a parametrix to \eqref{cauchy}. 
\begin{thm}\label{ansatz2}
Assume $g^{jk}(x,t)$ are $C^2(\Omega\times [0,T])$, then for every $\epsilon>0$ and $\lambda=\lambda(\epsilon)$ sufficiently large, there is an approximate solution $U_{\lambda}(x,t)$ to the problem \eqref{cauchy} such that 
\begin{align}
||u-U_{\lambda}(x,t)||_{L^2(\Omega\times [0,T])}\leq C_{22}C_2(\epsilon)
\end{align}
where $C_{22}$ depends on $||f_1||_{C^3(\Omega)}+||f_2||_{C^2(\Omega)},\,\,\mathrm{diam}_{g'}(\Omega')$, $T$, $\alpha_0$ and $||g^{jk}(x,t)||_{C^2(\Omega\times[0,T])}$. In particular, we have that $C_2(\epsilon)\rightarrow 0$ as $\epsilon\rightarrow 0$
\end{thm}
and also

\begin{thm}\label{ansatz1}
Assume $g^{jk}(x,t)$ are $C^{1,1}(\Omega\times[0,T])$. For every $\epsilon>0$ and $\lambda(\epsilon)$ sufficiently large, there is an approximate solution $U_{\lambda}(x,t)$ to the problem \eqref{cauchy} such that 
\begin{align}
||u-U_{\lambda}(x,t)||_{L^2(\Omega\times [0,T])}\leq  C_1(M_0)+\frac{C_{21}}{\sqrt\lambda}
\end{align}
where $C_{21}$ depends on $||f_1||_{C^3(\Omega)}+||f_2||_{C^2(\Omega)},\,\mathrm{diam}_{g'}(\Omega')$, $T$, $\alpha_0$ and $||g^{jk}(x,t)||_{C^{1,1}(\Omega\times[0,T])}$. Recall that $M_0$ is the modulus of continuity for $\nabla_xg^{jk}(x,t)$. In particular, we have that $C_1(M_0)\rightarrow 0$ as $M_0\rightarrow 0$.
\end{thm}
We call the factor $M_0$ the relative error in the problem. It is not possible to obtain the second set of estimates without the relative error. 

We concentrate on the $C^2$ coefficient case first. 
We look for a solution to \eqref{cauchy} of the form
\begin{align}\label{ansatz}
U_{\lambda}(x,t)=\sum\limits_{j=1}^2\sum\limits_{k=0}^2\lambda^{n/4}a_{jk}(x,t,\eta)\exp(i\psi_j(x,t,\eta))
\end{align}
where $a_{jk}(x,t,\eta)\in C^3(\Omega\times [0,T])$, and $\eta=\lambda\omega_0$, with $\lambda\in\mathbb{R}^+$ and $\omega_0\in \mathbb{R}^n$ such that $|\omega_0|=1$.  We let $v_j(x,t,\eta)\in C^3(\Omega\times [0,T])$ be the correction terms in the equation, that is 
\begin{align}
u(x,t)=U_{\lambda}(x,t)+\lambda^{n/4}\sum\limits_{j=1}^2v_{j}(x,t,\eta)\exp(i\psi_j(x,t,\eta))
\end{align}

Substituting \eqref{ansatz} into \eqref{cauchy} and grouping the powers of $\eta$ we get the following set of equations
\begin{align}\label{elk}
&H_0(x,t,\psi_{jx},\psi_{jt})=0 \nonumber\quad j=1,2 \\& 
\frac{\partial H_0(x,t,\psi_{jx},\psi_{jt})}{\partial\xi_0}\frac{\partial a_{j0}}{\partial t}+\sum\limits_{k=1}^n\frac{\partial H_0(x,t,\psi_{jx},\psi_{jt})}{\partial\xi_k}\frac{\partial a_{j0}(x,t,\eta)}{\partial x_k}+\left (H_0(x,t,\frac{\partial}{\partial x},\frac{\partial}{\partial t})\psi_j\right)a_{j0}\\& \label{trans}
L_ja_{jk}=-iH(x,t,\partial_x,\partial_t)a_{j,k-1} \quad k\geq 1, j=1,2 \nonumber
\end{align} 
where $L_j$ is the operator on the left and side of the second equation in \eqref{elk}. We need to be able to solve these equations to some order along a centralised curve. To this end, it helps if we use Cauchy data for \eqref{cauchy} with Gaussian packets rather than non-oscillatory $L^2$ functions.
 
In order to find the Cauchy conditions we look at the following initial conditions on $a_{jk}$ $j=1,2$. We obtain
\begin{align}
&\sum\limits_{j=1}^2\sum\limits_{k=0}^2a_{jk}(x,0,\eta)=f_1(x) \\& \nonumber
i\psi_{1t}(x,0,\eta)\sum\limits_{k=0}^2a_{1k}(x,0,\eta)+i\psi_{2t}(x,0,\eta)\sum\limits_{k=0}^2a_{2k}(x,0,\eta)+\\& \sum\limits_{j=1}^2\sum\limits_{k=0}^2\frac{\partial a_{jk}(x,0,\eta)}{\partial t}=f_2(x)
\end{align}

Because $\psi_j(x,0,\eta)=\lambda_j(x,0,\eta)$ for $j=1,2$ we can find $a_{10}(x,0,\eta)$ and $a_{20}(x,0,\eta)$ as the unique solution of the algebraic $2\times 2$ system
\begin{align}
& a_{10}(x,0,\eta)+a_{20}(x,0,\eta)=f_1(x) \\&
i\lambda_1(x,0,\eta)a_{10}+i\lambda_2(x,0,\eta)a_{20}=f_2(x)
\end{align}
as the determinant of this system $\lambda_2-\lambda_1\neq 0$, for all $x,t,\eta\neq 0$. We determine $a_{jk}(x,0,\eta)$ from the equations
\begin{align}
&a_{1k}(x,0,\eta)+a_{2k}(x,0,\eta)=0\\& 
i\lambda_1(x,0,\eta)a_{1k}+i\lambda_2(x,0,\eta)a_{2k}=-\frac{\partial a_{1,k-1}(x,0,\eta)}{\partial t}-\frac{\partial a_{2,k-1}(x,0,\eta)}{\partial t} \quad 1\leq k\leq 2 \nonumber
\end{align}
We see that 
\begin{align}
&HU_{\lambda}(x,t)=\sum\limits_{j=1}^2\lambda^{n/4}t_{j}(x,t,\eta)\exp(i\psi_j)
\end{align}
if we set $u-U_{\lambda}(x,t)=w(x,t,\eta)$ then it follows 
\begin{align}
&Hw(x,t,\eta)=-\sum\limits_{j=1}^2\lambda^{n/4}t_{j}(x,t,\eta)\exp(i\psi_j)\\& \nonumber
w(x,0,\eta)=0 \quad \frac{\partial w(x,0,\eta)}{\partial t}=-\lambda^{n/4}t_{j}(x,0,\eta)\exp(i\psi_j).
\end{align}
In order to bound the errors, we recall the following somewhat classical energy estimate. Let $u$ be a solution to the system with $f(x,t)\in L^2(\Omega\times [0,T])$, $u_0\in H^1(\Omega)$ and $u_1\in L^2(\Omega)$.
\begin{align}
&Hu=f(x,t) \quad \textrm{in} \quad \Omega\times [0,T] \\& \nonumber
u(x,0)=u_0 \quad \partial_tu(x,0)=u_1 \quad \mathrm{in} \quad \Omega \\& \nonumber
u(x,t)=0 \quad \mathrm{on} \quad \partial\Omega\times [0,T]
\end{align}
The following holds for the above Cauchy problem 
\begin{thm}\label{energy}
There exists $C$ depending on $\alpha_0$ and $||g^{jk}||_{C^{1,1}(\Omega\times [0,T])}$ and $\tilde{A}_1$ depending on $||g^{jk}||_{C^{1,1}(\Omega\times [0,T])}$ such that
\begin{align}
&||u||_{C([0,T];\dot{H}^1(\Omega))\cap C^1([0,T];L^2(\Omega))}\leq \\& \nonumber C\left(||u_0||_{H^1(\Omega)}+||u_1||_{L^2(\Omega)}+||f(x,t)||_{L^2(\Omega \times [0,T])}\right)\exp(\tilde{A}_1T). 
\end{align}
\end{thm} 
This particular variant is an exercise in integration by parts but one could c.f. Liones and Magenes \cite{lions}.  
In order to obtain the desired result, one needs to gain precision over $t_{j}$ for $j=1,2$ in order to use the energy estimate above. These estimates are where we use the regularity hypothesis to build our Gaussian beam. We chose the phase functions $\psi_j$ to be real valued along the curve $x=x(t)$, and we fix the initial value of the phase function as
\begin{align}\label{icephase}
\psi(0,x,\eta)=\left(i|x-x_0|^2/2+(x-x_0)\cdot \omega_0)\right|\eta|.
\end{align}
We assume $\psi$ is positive homogeneous of degree one in $|\eta|$, so we can write 
\begin{align}
\psi(x,0,\eta)=|\eta|\tilde{\psi}(0,x,\omega_0)=\lambda\tilde{\psi}(x,0,\omega_0)=\lambda\psi_0
\end{align} 
We now work with $\tilde{\psi}$ and drop the tilde everywhere where it is understood. 

We show we can write the phase function in a Taylor series.
\begin{lem}[from \cite{waterss}]
Given initial conditions \eqref{icephase}, for $\beta=1,2$ we have 
\begin{align*}
\psi^1_{\beta}(x,t)=(x-x(t))\cdot\omega(t) \qquad \psi^2_{\beta}(t,x)=M_{lj}(t)(x-x(t))^{l}(x-x(t))^{j} 
\end{align*}
where $M(t)$ is a matrix such that $\Im M(t)$ is positive definite, and $\psi_{\beta}(t,x)=\psi^1_{\beta}(x,t)+\psi^2_{\beta}(x,t)$ solves the eikonal (first equation in \eqref{elk}) to order $o(|x-x(t)|^2)$. 
\end{lem}

The first equation in \eqref{elk} can be recast in the following form
\begin{align}\label{hflow}
\psi_{1t}=h(x,t,\psi_{1x}).
\end{align}
Working backwards (and suppressing the 1 where it is understood, same is true for 2 but with $\lambda_2$ as the root of \eqref{elk}), if we differentiate the equation (\ref{hflow}) with respect to $x$ we obtain the following relations
\begin{align}\label{systempsi}
&\psi_{tx_j}-h_{p_i}(x,t,\psi_x)\psi_{x_ix_j}=h_{x_j}(x,t,\psi_x)\\ \nonumber
&\psi_{tt}-h_{p_i}(x,t,\psi_x)\psi_{x_it}=0\\ \nonumber
&\psi_{tx_jx_k}-h_{p_i}(x,t,\psi_x)\psi_{x_ix_jx_k}=\\ \nonumber
&h_{x_jx_k}(x,t,\psi_x)+h_{x_jp_i}(x,t,\psi_x)\psi_{x_ix_k}+h_{x_lp_i}(x,t,\psi_x)\psi_{x_ix_j}+h_{p_ip_m}(x,t,\psi_x)\psi_{x_ix_j}\psi_{x_mx_k}.
\end{align}
To simplify this set of relations we consider the matrices $A$, $B$ and $C$ which are defined with entries as follows:
\begin{align}\label{matrices}
& A_{j}^i=\{h_{x_ix_j}(x(t),t,\omega(t))\} \\ \nonumber
& B_j^i=\{h_{x_ip_j}(x(t),t,\omega(t))\}  \\ \nonumber
& C_j^i=\{h_{p_ip_j}(x(t),t,\omega(t))\}.  \nonumber
\end{align}
If we set $\nabla_x\psi(t,x(t))=\omega(t)$ then the equations we know that the phase must satisfy (\ref{systempsi}) along the path $\{(t,x(t)): 0\leq t\leq T\}$ become
\begin{align}\label{bich}
&\frac{dx(t)}{dt}=-h_{p}(x(t),t,\omega(t)) \qquad \frac{d\omega(t)}{dt}=h_x(x(t),t,\omega(t)) \qquad \frac{d\psi}{dt}(x(t),t)=0 
\\ \label{ricatti}
& \frac{dM}{dt}=A+BM+MB^t+MCM  
\end{align}
The last equation (\ref{ricatti}) is a matrix Riccati equation associated to (\ref{bich}). It is a non-linear equation which is not always well-posed. From the equation $\psi_t(x(t),t)=0$ in (\ref{bich}), and the initial condition, this implies $\psi(x(t),t)=x(t)\cdot \omega(t)$ and $\psi_1(t)=\omega(t)$ as claimed. The crucial choice is therefore the Hessian, $M(t)$ which is associated to the second order terms in $(x-x(t))$. We chose the initial condition $M(0)=iI$. We also associate the matrices $Y(t)$ and $N(t)$ to the Hessian $M(t)$. Now we let $Y(t)$ and $N(t)$ satisfy the following system: 
\begin{align}\label{systemY}
&\frac{dY}{dt}=-B^tY-CN \qquad \frac{dN}{dt}=AY+BN \\& 
(Y(0),N(0))=(I,iI) \nonumber
\end{align}
We claim whenever $(Y(t),N(t))$ is a solution to (\ref{systemY}), then $Y(t)$ is invertible, and the solution $M(t)=N(t)Y^{-1}(t)$ to (\ref{ricatti}) exists for all bounded time intervals, if and only if $M(t)$ is positive definite. With the given initial conditions (\ref{icephase}), this is equivalent to the claim we can find a phase function such that $\Im M(t)$ is positive definite. The compassion is now standard in the literature and we will not include it here, for details, see \cite{waterss}, Theorem 1, or \cite{Ralston}. 

Now we proceed to find the coefficients $a_j(x,t)$ of the beam. The linear operator $L$ is the transport operator which acts on functions $a(x,t)$ in the following manner
\begin{align*}
La=2\psi_t a_t-2g^{kl}\psi_{x_k}a_{x_l}+(\Box_g\psi)a.
\end{align*}
It is natural to consider $a_j(x,t)$ as a sum of homogeneous polynomial with respect to $x-x(t)$ as well, so we Taylor expand
\begin{align}\label{taylor}
a_j(x,t)=\sum\limits_{0\leq l}a_{j,l}(t)(x-x(t))^l.
\end{align} 
The natural number $l$, depends on the regularity of the coefficients and the regularity of the $g^{jk}$. For our purposes, we only use $l=0$ which suffices for this article. 
From this identity we can match up term in our Taylor series expansion. Using (\ref{bich}), we obtain a differential equation for $a_{j,l}(t)$;
\begin{align}\label{diffeq}
\frac{d}{dt}a_{j,l}(t)+r(t)a_{j,l}(t)=F_{j,l}(t). 
\end{align} 
The right hand side is a homogenous polynomial of order $l$ in $x-x(t)$ which depends on $a_{j,l}(t)$ and $\psi_{j}$. The factor $r(t)$ comes from computing $H\psi$ along the curves (\ref{bich}). We obtain ordinary differential equations defining $a_{j,l}(t)$ as follows
\begin{align*}
\frac{d}{dt}a_{j,l}(t)+\sparen{\frac{d}{dt}\sigma(t)}a_{j,l}(t)=F_{j,l}(t).
\end{align*}
for some $\sigma(t)$ such that $r(t)=\frac{d\sigma(t)}{dt}$. Solutions to these equations are given by 
\begin{align}\label{coeffeq}
a_{j,l}(t)=\sigma(t)\sparen{a_{j,l}(0)+\int\limits_0^t\sigma^{-1}(s)F_{j,l}(s)\,ds}
\end{align}
A lengthy computation in \cite{eskin}, Section 64.3 gives 
\begin{align}\label{sigma}
\sigma(t)=\frac{1}{\sqrt{|\frac{\partial x(t)}{\partial x_0}|}}\exp\left(\int\limits_0^tF(x(s),s)\,ds\right)
\end{align}
with 
\begin{align}
F(x,t)=-\frac{1}{\psi_t(x,t)}\sum\limits_{p,k=1}^ng_{x_k}^{pk}(x,t)\psi_{x_p}-\frac{1}{4\psi_t^2}\sum\limits_{p,k=1}^ng_t^{pk}(x,t)\psi_{x_p}\psi_{x_k}
\end{align}
When finding $\sigma(t)$, evaluating at $x=x(t)$, we note that on null bicharacteristics $\nabla\psi=\omega(t)$ and $\psi_t=\lambda_i(t,x(t))$, $i=1,2$. 

\begin{lem}\label{size}
We can write
\begin{align*}
a_{0}(t,x)=\sigma(t)a(0,x)+\mathcal{O}(|x-x(t)|).
\end{align*}
\end{lem}
with $\sigma(t)$ given by \eqref{sigma} above. 
\begin{proof}
We can compute the first few terms given by (\ref{coeffeq}). For the case of $C^2$ coefficients the error terms above are correct and make sense. Since we know that $F_{0,0}=0$, we compute
\begin{align}\label{a0}
a_{0,0}(t)=a_{0,0}(0)\sigma(t).
\end{align}
\end{proof}

Assuming that $H(x,t,\xi,\tau)=H(\infty,t,\xi,\tau)$ for $|x|>R$, then $w\equiv 0$ for all $|x|>R$. 
Assuming $\mathrm{diam}(\Omega')>R$, $w$ satisfies the hypothesis of Theorem \ref{energy}. Inserting $U_{\lambda}(x,t)$ into the wave equation we now consider the sets
\begin{align*}
& A_{\lambda}=\{x\in \Omega': |x-x(t)|\leq \sqrt{B}\lambda^{-1/2}\} \\&
   A_{\lambda}^c=\{x\in \Omega': |x-x(t)|>\sqrt{B}\lambda^{-1/2}\}
\end{align*}
where $B$ is a fixed positive constant.  

From Taylor's theorem, assuming $g^{jk}(x,t)$ has two derivatives, the phase can be expanded to second order around $|x-x(t)|$ with error which is $o|x-x(t)|^2$. The usual error is $\mathcal{O}|x-x(t)|^3$ for a two term expansion of a $C^3$ phase. The higher the regularity, the better the error estimates in Taylor's theorem, and the more accurate the Ansatz construction. 

Applying these definitions we see that 
\begin{lem}
In the set $A_{\lambda}$, each $t_{j}$ $j=1,2$ can be bounded as
\begin{align}
t_{j}=\mathcal{O}(\epsilon\lambda^2|x-x(t)|^2)+\mathcal{O}(\epsilon\lambda |x-x(t)|)
\end{align}
and in $A_{\lambda}^c$ 
\begin{align}
t_{j}=\mathcal{O}(\lambda^2|x-x(t)|^2)+\mathcal{O}(\lambda |x-x(t)|).
\end{align}
\end{lem}
\begin{proof} 
The first order terms depend on the $C^2$ norm of $g^{jk}$ and the $L^2(\Omega\times(0,T)))$ norm of $a(x,t)$, from solving \eqref{elk} up to $o(|x-x(t)|)$, using the Peano form of the remainder in Taylor's theorem. The second order terms depend on the $C^2$ norm of $g^{jk}$ and the $H^3(\Omega\times(0,T))$ norm of $a(x,t)$, from solving \eqref{trans} up to $o|x-x(t)|$. Here for the first terms, we have used the Peano form of the remainder, from Taylor's Theorem. When we reformulate the remainder this states that that $\forall \epsilon>0$ there exists a $\lambda(\epsilon)$ sufficiently large so that for the remainder, say $r_2(x)$, we have that
\begin{align}
|r_2(x)|\leq \epsilon|x-x(t)|^2
\end{align}
whenever $|x-x(t)|\leq \sqrt{B}\lambda^{-1/2}$ provided $\lambda\geq \lambda(\epsilon)$. Away from this region, in $A_{\lambda}^c$ it is possible to approximate the remainder, but only to $\mathcal{O}|x-x(t)|^2$. This is computation is the similar for the transport equation, whence the Lemma is proved. 
\end{proof}

\begin{proof}[Proof of Theorem \ref{ansatz2}]
We make the definitions: 
\begin{align}
\int\limits_b^{\infty}\exp(-x^2)\,dx=\mathrm{erfc}(b)  \qquad \int\limits_0^{b}\exp(-x^2)\,dx=\mathrm{erf}(b)
\end{align}
We know that the exponential function admits the following asymptotics:
\begin{align}\label{reallylarge}
\mathrm{erfc}(b)= \frac{\exp(-b^2)}{2b} +\mathcal{O}\sparen{\frac{\exp(-b^2)}{b^3}}
\end{align}
from Example 4 on page 255 of \cite{bender}, whenever $b$ is sufficiently large.  

We now proceed to estimate
\begin{align}
\lambda^{n/2}t_{j}\exp(i\lambda\psi_{j})
\end{align}
in $L^2(\Omega')$ norm. We know
\begin{align}\label{oE}
&\lambda^{n/2}\int\limits_{A_{\lambda}}\epsilon |x-x(t)|^2\lambda^2\exp(-\lambda |x-x(t)|^2)+\epsilon \lambda|x-x(t)|\exp(-\lambda |x-x(t)|^2)\,dx \\& \nonumber \leq C\lambda\left(B\epsilon+\frac{\sqrt{B}\epsilon}{\sqrt{\lambda}}\right)\mathrm{erf}(\sqrt{B})
\end{align}
with $C$ independent of $\lambda$ and $\epsilon$. 

We also have that 
\begin{align}\label{sE}
&\lambda^{n/2}\int\limits_{A_{\lambda}^c}|x-x(t)|^2\lambda^2\exp(-\lambda|x-x(t)|^2)+\lambda|x-x(t)|\exp(-\lambda|x-x(t)|^2)\,dx\leq \\& \nonumber C\lambda\mathrm{erfc}(\sqrt{B})
\end{align}
with $C$ independent of $\lambda$ and $\epsilon$ depending on diam$(\Omega')$ and $t$.

Combining, \eqref{sE} and \eqref{oE}, after making the change of variables $\sqrt{M(t)}|x-x(t)|\rightarrow |x-x(t)|$,  one sees that for $B,\lambda>1$, 
\begin{align}
||\lambda^{n/4}t_{j}\exp(i\lambda\psi_j)||_{L^2(\Omega')}\leq C\lambda\left(B\epsilon+\mathrm{erfc}(\sqrt{B})\right)
\end{align}
with $C$ independent of $\lambda$ and $\epsilon$ depending on diam$(\Omega')$ and $t$.

Moreover one has that using the energy estimates in Theorem \ref{energy} and the asymptotic \eqref{reallylarge}
\begin{align}
||w(x,t)||_{\dot{H}_0^1(\Omega'\times(0,T))}\leq \int\limits_0^T\sum\limits_{j=1}^2
||\lambda^{n/4}t_{j}\exp(i\psi_j)||_{L^2(\Omega')}\,dt\leq C_{22}\lambda\left(B\epsilon+\exp(-B)\right)
\end{align}
from which it follows from the form of the remainder:
\begin{align}
||w(x,t)||_{L^2(\Omega'\times[0,T]))}\leq C_{22}\left(B\epsilon+\exp(-B)\right)
\end{align}
The constant $B$ is arbitrary, so we can take the minimum in $B$ of the right hand side ($\sim \log (\epsilon^{-1}))$ to reach the desired conclusion. 
\end{proof}

Instead of the Gaussian beam tail, we no longer have the ability to construct the Hessian matrix. We use as our initial data as before
\begin{align}
(x-x_0)\cdot\omega_0+i|x-x_0|^2
\end{align}
but we propagate it as
\begin{align}\label{special}
(x-x(t))\cdot\omega(t)+i|\omega(t)||x-x(t)|^2)
\end{align}

In order to prove Theorem \ref{ansatz1} we need to be able to solve the eikonal only to first order. We apply the operator $H$ to the function and we obtain the following error terms.
\begin{lem}
We have that 
\[
|H_0(t,x,\psi_x,\psi_t)|\leq 10M_0|\omega(t)|^2|x-x(t)|^2
\]
\end{lem}
\begin{proof}
This is a refinement of Lemma 3 in \cite{watersp}, . As $H_0(t,x,\psi_x,\psi_t)$ is positive homogeneous of degree two in $|\omega(t)|$, we start by showing on nul-bicharacteristics $(t,x(t))$ 
\[
\nabla_{x}H_0(t,x(t),\psi_x(t,x(t)),\psi_t(t,x(t)))=0.
\]
and then refine the $o(|x-x(t)|)$ terms from the Peano form of the remainder. Computing $\nabla_{x}H_0(t,x,\psi_x,\psi_t)$,
\begin{equation}\label{expression}
\frac{\partial}{\partial x_j}H_0(t,x,\psi_x,\psi_t)=H_{0x_j}+H_{0\xi_l}\psi_{x_lx_j}+H_{0\tau}\psi_{tx_j}.
\end{equation}
Dividing (\ref{expression}) by $2\lambda_1$ or $(2\lambda_2)$ and substituting the equations in (\ref{bich}) into the right hand side of (\ref{expression}) we obtain for the right hand side of (\ref{expression})
\begin{equation}\label{input}
-\frac{d\omega_j}{dt}+ \frac{dx_l}{dt}\psi_{x_lx_j}+\psi_{tx_j}
\end{equation}
As $\psi_{x_j}(t,x(t))=\omega_j(t)$, differentiating $\omega_j(t)$ with respect to $t$  we have
\begin{equation}\label{dif}
\frac{d\omega_j}{dt}= \frac{dx_l}{dt}\psi_{x_lx_j}+\psi_{tx_j}.
\end{equation}
Substituting (\ref{dif}) into (\ref{input}) implies (\ref{input}) is 0, which happens if and only if (\ref{expression}) vanishes on nul-bicharacteristics. 

We can now write
\begin{align}
&H_0(x,t,\psi_x(t,x),\psi_t(t,x))=\\& \nonumber H_0(x(t),t,\psi_x(t,x(t)),\psi_t(t,x(t)))+\nabla_xH_0(x(t),t,\psi_x(t,x(t)),\psi_t(t,x(t)))+\\& \nonumber r_1(x)|x-x(t)|
\end{align}
We abbreviate $H_0(x(t),t,\psi_x,\psi_t)=H_0(x(t))$ We note then that if $h=x-x(t)$, the remainder $r_1(x)$ is then
\begin{align}\label{remainder}
r_1(x)= \frac{H_0(x(t)+h)-H_0(x(t))}{h}-\nabla_xH_0(x(t))
\end{align}
The result will follow if we can prove 
\begin{align}
|r_1(x)|\leq 10M_0|\omega(t)|^2h
\end{align}
because we just showed $\nabla_xH_0(x(t))=0$ and $H_0(x(t))=0$ by definition of the ODE's \eqref{bich}. 
The mean value theorem applied to \eqref{remainder} implies that it suffices to prove for some 
$c\in (0,h)$
\begin{align}
|\nabla_xH_0(x(t)+c)-\nabla_xH_0(x(t))|\leq 10M_0h|\omega(t)|^2
\end{align}
But this is follows as setting $x(t)+c=\tilde{c}$, we have that 
\begin{align}
\left|\nabla_x\left(\sum\limits_{jk}g^{jk}\psi_{x_j}\psi_{x_k}\right)(\tilde{c})-\nabla_x\left(\sum\limits_{jk}g^{jk}\psi_{x_j}\psi_{x_k}\right)(x(t))\right|\leq 2M_0|\omega(t)|^2h
\end{align}
where we used the fact we can expand $\psi_{x_j}\psi_{x_k}$ one term in $x$, and also  
\begin{align}
\left|\nabla_x(\psi_t^2)(\tilde{c})-\nabla_x(\psi_t^2)(x(t))\right|\leq 2M_0|\omega(t)|^2h
\end{align}
which follows by a short computation writing down the explicit form of $\psi_t$, and using \eqref{bich}, with $\psi_t(t,x(t))=\lambda_1(t,x(t))$. 
\end{proof}

\begin{proof}[Proof of Theorem \ref{ansatz1}]
We then claim that $U_{\lambda}$ is a parametrix solution to \eqref{cauchy}, but the phase functions now have the special form above in \eqref{special}. In particular, using the Peano form of the remainder, but this time with only 1 term in the Taylor expansion, the error term from solving \eqref{elk} is then 
\begin{align}
\lambda^{n/2}\int\limits_{A_{\lambda}}M_0\lambda^2|x-x(t)|^2\exp(-\lambda|x-x(t)|^2)\,dx\leq C\lambda BM_0
\end{align}
and 
\begin{align}
\lambda^{n/2}\int\limits_{A_{\lambda}^c}\lambda^2|x-x(t)|\exp(-\lambda|x-x(t)|^2)\,dx\leq C\lambda \mathrm{erfc}(-\sqrt{B})
\end{align}
and the other terms from solving the transport equation \eqref{trans} are bounded in a similar way. From Theorem \ref{energy} and Corollary \ref{size}, we can again conclude the desired result after using the asymptotic for the error function \eqref{reallylarge} and minimising in $B$. 
\end{proof}

\section{Proof of Observability Estimates}
We can write $||u||_{L^2(\Omega_0\times[0,T])}^2$ as 
\begin{align}
\lambda^{n/2}\int\limits_0^T\int\limits_{\Omega_0}\left(\sum\limits_{j=1}^2(a_{0jl}\exp(i\lambda\psi_{jl})\right)^2\,dx\,dt+E_l
\end{align}
where $l$ denotes the regularity of the coefficients, with $l=1$ corresponding to $C^{1,1}$ coefficients and $l=2$ corresponding to $C^2$ coefficients. The term $E_l$, $l=1,2$ is bounded by Theorems \ref{ansatz1} and \ref{ansatz2} respectively. The cross terms involving $\exp(i\lambda\psi_1)\overline{\exp(i\lambda\psi_2)}$ are oscillatory and can be discarded by stationary phase. Indeed, separating the phase into real and imaginary parts, applying Lemma \ref{stationary} and using the integrals \eqref{GB}, gives 
\begin{align}
|\lambda^{n/2}\int\limits a_{01l}\overline{a_{02l}}\exp(i\lambda\psi_1)\overline{\exp(i\lambda\psi_2)}\,dx|\leq \frac{C}{\sqrt{\lambda}}||a_{01l}a_{02l}||_{C^1(\Omega)}
\end{align}
with $C$ depending on $\Omega$, and $||g^{jk}||_{C^{1,1}}$. 
We use the fact that 
\begin{align}
(b-d)^2+(b+d)^2=2(b^2+d^2)
\end{align}
along with the initial conditions to conclude 
\begin{align}\label{estimate1}
&\lambda^{n/2}\int\limits_{0}^T\int\limits_{\Omega_0}\sigma_l^2(t)\left(f_1^2(x(0))+\frac{f_2^2(x(0))}{\lambda_1^2(0,x(0))}\right)\exp(-\lambda\beta_l(t)|x-x(t)|^2)\,dx\,dt\leq \\& ||u||_{L^2(\Omega_0\times[0,T])}^2+E_l \nonumber
\end{align}
where $\beta_2(t)=2\Im M(t)$ and $\beta_1(t)=|\omega(t)|$.  
Now we need the following Lemmas:
\begin{lem}\label{varchange}
We have that $\sigma_l(t)(\beta_l(t))^{n/2}=C_l(t)>0$ with $l=1,2$ and $\sigma_l(t)$ given by \eqref{sigma}. 
\end{lem}
\begin{proof}
We need to solve $H\psi=0$ to first order around $x(t)$, as indicated earlier. The inequalities are direct as $\sigma_l(t)$, is positive for both $l=1,2$ and $|\omega(t)|$ is positive and $\Im M(t)$ is positive definite. We also remark that the inequality for $l=2$ can also be found as a result of \cite{LKK}, Lemma 2.58.
\end{proof}

\begin{proof}[Proof of Theorem \ref{low1Rn2}]
Whenever $x\in A_{\lambda}^c$ the main term is small, as we know that
\begin{align}\label{far}
\lambda^{n/2}\int\limits_{A_{\lambda}^c\cap \Omega_0}\exp(-\lambda|x-x(t)|^2)\,dx\,dt\leq \mathrm{erfc}(\sqrt{B})
\end{align} 
if $B=B_{min}(\epsilon)$. 

If $x\in \Omega_0\cap A_{\lambda}$ then necessarily $x(t)\in \Omega_0$ as $\lambda\rightarrow\infty$. We know that $x(0)\mapsto x(t)$ is connected, as $x(t)$ traces out a continuous curve. Thus the GCC is automatically satisfied to get any meaningful estimate. One necessarily has 
\begin{align}
C_2(T)f_1^2(x(0))\mathrm{erf}(\sqrt{B})\leq ||u||_{L^2(\Omega_0\times [0,T])}^2+E_{2}
\end{align}
and similarly for $f_2^2(x(0))/\lambda_1^2(0,x(0))$, with $C_2(T)=\int\limits_0^TC_2(t)\,dt$.

We now obtain
\begin{align}
C_2(T)\left(f_1^2(x(0))+\frac{f_2^2(x(0))}{\lambda_1^2(0,x(0))}\right)\left(\mathrm{erf}{\sqrt{B}}-\mathrm{erfc}{\sqrt{B}}\right)\leq ||u||_{L^2(\Omega_0\times [0,T])}^2+C_{21}C_2(\epsilon)
\end{align}
When $B=B_{\min}(\epsilon)$, then the factor $\left(\mathrm{erf}{\sqrt{B}}-\mathrm{erfc}{\sqrt{B}}\right)=1-2\mathrm{erfc}{\sqrt{B}}$ is very nearly 1, by the asymptotic \eqref{reallylarge}. Applying the convergence Lemma \ref{H} in the appendix to approximate $f_1^2(x(0))$ and $f_2^2(x(0))/\lambda_1^2(0,x(0))$ gives the desired result.  
\end{proof}

\begin{proof}[Proof of Theorem \ref{low1Rn2}]
The steps of the proof follow exactly the same using the appropriate inequality in Lemma \ref{varchange}, except for the fact that the error term $E_1$ is now bounded by $C_1(M_0)$ with $C_1(M_0)\rightarrow 0$ as $M_0\rightarrow 0$. Thus, the parameter $M_0$ takes the place of the parameter $\epsilon$ in the above proof. 
\end{proof}

\section{Appendix: Convergence Lemmas}


We prove the following, similar to \cite{waterss}: 
\begin{lem}\label{H}
Let $h(t,x)\in C^1((0,T)\times O)$, where $O$ is an open subset of $\mathbb{R}^n$ and $B$ be a symmetric nonsingular matrix such that $\Re{B}\geq 0$, if $x(t)$ is a continuous curve defined in terms of $t$ in $O$, then we have the following uniform estimate
\begin{align}\label{rrep}
&\sabs{\sparen{\frac{\lambda}{\pi}}^{\frac{n}{2}}(\det B)^{\frac{1}{2}}\int\limits_{O}\exp\sparen{\langle-\lambda B(x-x(t)), (x-x(t)\rangle} h(t,x)\,dx-h(t,x(t))}< \\& \nonumber
\sparen{\frac{2\lambda^{\sigma}}{\sqrt{\lambda}}+4\mathrm{erfc}(-\lambda^{2\sigma})}\norm{h(t,x)}_{C^1((0,T)\times O)}
\end{align}
with $\sigma\in (0,1/6)$.
\end{lem}
\begin{proof}
 The assumption that $h(t,x)$ is in $C^1([0,T]\times O)$ implies that $h(t,x)$ is locally uniformly Lipschitz continuous with Lipschitz constant $\norm{h(t,x)}_{C^1((0,T)\times O)}$. We set $\epsilon=\lambda^{\sigma-1/2}\norm{h(t,x)}_{C^1((0,T)\times O)}$.  We know that for $\eta=\lambda^{\sigma-1/2}$, if $x$ is such that $|x-x(t)|<\eta$, this implies
\begin{align*}
\sabs{h(t,x)-h(t,x(t))}<2\frac{\lambda^{\sigma}}{\sqrt{\lambda}}\norm{h(t,x)}_{C^1((0,T)\times O)}
\end{align*}
Using change of variables, we then obtain the bounds
\begin{align*}
&\sabs{\sparen{\frac{\lambda}{\pi}}^{\frac{n}{2}}(\det B)^{\frac{1}{2}}\int\limits_{O}\exp\sparen{\langle-\lambda B(x-x(t)), (x-x(t)\rangle} h(t,x)\,dx-h(t,x(t))}<\\& \nonumber \frac{2\lambda^{\sigma}}{\sqrt{\lambda}}\norm{h(t,x)}_{C^1((0,T)\times O)}\int\limits_{|y|\leq C\eta}\sparen{\frac{\lambda}{\pi}}^{\frac{n}{2}}\exp(-\lambda |y|^2)\,dy+\\& \nonumber
2\norm{h(t,x)}_{C^0((0,T)\times O)}\int\limits_{C\eta<|y|< \infty}\sparen{\frac{\lambda}{\pi}}^{\frac{n}{2}}\exp(-\lambda |y|^2)\,dy \leq \\& \nonumber
 \sparen{\frac{2\lambda^{\sigma}}{\sqrt{\lambda}}+ 4\mathrm{erfc}(-\lambda^{2\sigma})}\norm{h(t,x)}_{C^1((0,T)\times O)}
\end{align*}
Here we notice that normalization factor of $(\det B)^{1/2}$ makes the Gaussian kernel normalized to $1$.
\end{proof}

We recall the following elementary Lemmas
\begin{lem}
Suppose $\Omega\subset\mathbb{R}^n$ is open, $f:\Omega\rightarrow \mathbb{R}$ is $C^{\infty}$, $p\in \Omega$ and $\nabla f(p)\neq 0$. Then there are neighborhoods $U$ and $V$ of $0$ and $p$ respectively and a $C^{\infty}$ diffeomorphism $G: U\rightarrow V$ with $G(0)=p$ and 
\begin{align}
f\circ G(x)=f(p)+x_n
\end{align}
\end{lem}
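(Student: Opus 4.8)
\section*{Proof proposal}

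The plan is to deduce this directly from the inverse function theorem by straightening the level sets of $f$. Since $\nabla f(p)\neq 0$, some partial derivative $\partial_j f(p)$ is nonzero. After composing with a linear permutation of the coordinates — a global $C^{\infty}$ diffeomorphism, so it does not affect the conclusion — I may assume that $\partial_n f(p)\neq 0$. (Alternatively, one may pre-compose with any invertible linear map $A$ whose $n$-th column is transverse to $\ker\nabla f(p)$, since then $\partial_n(f\circ A)(A^{-1}p)=\nabla f(p)\cdot(Ae_n)\neq 0$.)

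Next I would introduce the auxiliary map $H\colon \Omega\to\mathbb{R}^n$ defined by
\begin{align}
H(y)=\sparen{y_1-p_1,\ \dots,\ y_{n-1}-p_{n-1},\ f(y)-f(p)}.
\end{align}
Then $H$ is $C^{\infty}$ on $\Omega$ and $H(p)=0$. Its Jacobian at $p$ is block lower triangular, with the identity block $I_{n-1}$ in the first $n-1$ coordinates and the single entry $\partial_n f(p)$ in the bottom-right corner, so $\det DH(p)=\partial_n f(p)\neq 0$. By the inverse function theorem there are open neighborhoods $V$ of $p$ and $U$ of $0$ such that $H|_V\colon V\to U$ is a $C^{\infty}$ diffeomorphism.

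Finally, set $G=(H|_V)^{-1}\colon U\to V$; this is a $C^{\infty}$ diffeomorphism with $G(0)=p$. For $x\in U$ write $y=G(x)$, so that $H(y)=x$. Reading off the last component of this identity gives $f(y)-f(p)=x_n$, that is,
\begin{align}
f\circ G(x)=f(p)+x_n,
\end{align}
which is the claim. There is essentially no obstacle here; the only points requiring (minor) care are the reduction to the case $\partial_n f(p)\neq 0$ and the verification that $DH(p)$ is invertible, both of which are routine.
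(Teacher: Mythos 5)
Your proof is correct: the paper states this straightening lemma without proof, as a recalled elementary fact, and your argument via the inverse function theorem applied to $H(y)=(y_1-p_1,\dots,y_{n-1}-p_{n-1},f(y)-f(p))$ is exactly the standard route the paper implicitly relies on when it invokes the lemma in the stationary phase estimate. Both the reduction to $\partial_n f(p)\neq 0$ by a coordinate permutation and the computation $\det DH(p)=\partial_n f(p)\neq 0$ are handled correctly, so nothing further is needed.
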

We also see that 
\begin{lem}\label{stationary}
Let $\phi$ be a real valued $C^{\infty}$ function and let $v$ be a $C_0^{\infty}$ function and define 
\begin{align}
I(\lambda)=\int\exp(-\pi i\lambda\phi(x))a(x)v(x)\,dx
\end{align}
here $\lambda>0$ is a large scalar and $a(x)\in W^{N,1}(\Omega)$. Suppose $\Omega\subset \mathbb{R}^n$ is open, $\phi:\Omega\rightarrow \mathbb{R}$ is $C^{\infty}, p\in \Omega$ and $\nabla\phi(p)\neq 0$. Suppose that $v\in C_0^{\infty}$ has its support in a sufficiently small neighborhood of $p$ then 
\begin{align}
\forall N, \exists C_n: |I(\lambda)|\leq C_N\lambda^{-N}
\end{align}
and furthermore $C_N$ depends only on bounds for $N+1$ derivatives of $\phi$, the $W^{N,1}(\Omega)$ norm of $a(x)$ and a lower bound for $|\nabla\phi(p)|$. 
\end{lem}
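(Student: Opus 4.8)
The plan is to treat this as a standard non-stationary phase (repeated integration by parts) argument, the twist being that the amplitude $a(x)$ only lies in a Sobolev space $W^{N,1}(\Omega)$ rather than being $C^\infty$, so the integrations by parts must be done in the distributional sense and one must keep careful track of how many derivatives land on $a$. First I would localize: since $v$ is supported in a small neighborhood of $p$ where $\nabla\phi(p)\neq 0$, by continuity we may shrink the support so that $|\nabla\phi(x)| \geq c > 0$ on $\operatorname{supp} v$, with $c$ a fixed fraction of $|\nabla\phi(p)|$. Then I would apply the preceding lemma (the straightening lemma): there is a $C^\infty$ diffeomorphism $G$ with $\phi\circ G(x) = \phi(p) + x_n$ on a neighborhood of $0$. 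Pulling the integral back through $G$, and absorbing the Jacobian $|\det DG|$ and the factor $e^{-\pi i\lambda\phi(p)}$ (which has modulus one) into a new amplitude, we reduce to estimating $\int \exp(-\pi i\lambda x_n)\,\tilde a(x)\,\tilde v(x)\,dx$, where $\tilde v\in C_0^\infty$ and $\tilde a = (a\circ G)\cdot|\det DG| \in W^{N,1}$ with norm controlled by $\|a\|_{W^{N,1}}$ and by $N$ derivatives of $G$ (hence of $\phi$).

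Next I would integrate by parts $N$ times in the $x_n$ variable using the identity $\exp(-\pi i\lambda x_n) = \frac{1}{(-\pi i\lambda)}\partial_{x_n}\exp(-\pi i\lambda x_n)$. Each integration by parts produces a factor $(\pi\lambda)^{-1}$ and moves one $\partial_{x_n}$ onto the product $\tilde a\tilde v$; there are no boundary terms because $\tilde v$ has compact support. After $N$ steps one obtains
\begin{align*}
|I(\lambda)| \leq (\pi\lambda)^{-N}\int \left|\partial_{x_n}^N\bigl(\tilde a(x)\tilde v(x)\bigr)\right|\,dx.
\end{align*}
By the Leibniz rule the integrand is a sum of terms $\partial_{x_n}^j\tilde a\cdot\partial_{x_n}^{N-j}\tilde v$ with $0\leq j\leq N$; since $\tilde v\in C_0^\infty$ its derivatives are bounded with compact support, so each term is bounded in $L^1$ by $C\|\tilde a\|_{W^{j,1}}\leq C\|\tilde a\|_{W^{N,1}}$. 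This gives $|I(\lambda)|\leq C_N\lambda^{-N}$ with $C_N$ depending only on $\|a\|_{W^{N,1}(\Omega)}$, on $C^\infty$-bounds for $\tilde v$, and on $N+1$ derivatives of $\phi$ (the extra derivative arising because straightening $\phi$ and differentiating the Jacobian costs one order) together with the lower bound $|\nabla\phi(p)|\geq c$; this is exactly the dependence claimed.

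The step I expect to be the main (minor) obstacle is the bookkeeping around the low regularity of $a$: one must justify that the integration-by-parts identity is valid when $a$ is only a Sobolev function and not smooth. The clean way is to approximate $a$ in $W^{N,1}$ by $C^\infty$ functions $a_\varepsilon$ (mollification), run the argument for each $a_\varepsilon$ where everything is classical, and pass to the limit — both sides are continuous in the $W^{N,1}$ norm of the amplitude. One also has to check that the diffeomorphism $G$ from the straightening lemma, which is built from $\phi$ alone, contributes at most $N$ extra derivatives of $\phi$ to the constant (the $N$-th derivative of $a\circ G$ involves up to $N$ derivatives of $G$, i.e. up to $N+1$ derivatives of $\phi$ via the defining relation $\phi\circ G(x)=\phi(p)+x_n$), which accounts for the "$N+1$ derivatives of $\phi$" in the statement. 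Everything else is the routine non-stationary phase estimate.
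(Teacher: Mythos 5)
Your argument is essentially the paper's: both proofs first invoke the straightening lemma to change variables so the phase becomes linear, and then extract the $\lambda^{-N}$ decay from $N$ amplitude derivatives in $L^1$ — the paper phrases this last step as the decay of the Fourier transform $\widehat{av}(\tfrac{\lambda}{2}e_n)$ via $\widehat{D^{\alpha}f}(\xi)=|\xi|^{\alpha}\hat{f}(\xi)$, which is exactly your repeated integration by parts in the $x_n$ variable. Your added details (localizing so $|\nabla\phi|\geq c$ on $\operatorname{supp} v$, mollifying $a$ to justify the integration by parts, and the Leibniz-rule bookkeeping giving the dependence on $N+1$ derivatives of $\phi$) are correct refinements of the same argument, not a different route.
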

\begin{proof}
Let $\phi_1=\phi_2\circ G$ where $G$ is a smooth diffeomorphism. Then we have
\begin{align*}
&\int\exp(-\pi i\lambda\phi_2(x))a(x)v(x)\,dx=\\& \int\exp(-\pi i\lambda\phi_1(G^{-1}x))a(x)v(x)\,dx=\\&
\int\exp(-\pi i\lambda\phi_1(y))a(Gy)v(Gy)d(Gy)=\\&
\int\exp(-\pi i\lambda\phi_1(y))a(Gy)v(Gy)|J_G(y)|\,dy
\end{align*}
where $J_G$ is the Jacobian determinant. 
The straightening lemma and the calculation reduce this to the case where $\phi(x)=x_n+c$. In this case, letting $e_n=(0, . . .,0,1)$ we have
\begin{align}
I(\lambda)=\exp(-i\pi\lambda c)\hat{av}(\frac{\lambda}{2}e_n) 
\end{align}
and this has the requiste decay as 
\begin{align}
\widehat{D^{\alpha}f}(\xi)=|\xi|^{\alpha}\hat{f}(\xi)
\end{align}
\end{proof}

We recall the following set of 1-dimensional Gaussian integrals
\begin{align}\label{GB}
&\int\limits_0^{\infty}x^{2n}\exp(-ax^2)\,dx
=\sqrt{\frac{\pi}{a}}\frac{(2n-1)!!}{a^n2^{n+1}}\quad 
\int\limits_0^{\infty}x^{2n+1}\exp(-ax^2)\,dx=\frac{n!}{2a^{n+1}}\\&
\int\limits_{-\infty}^{\infty}x^{2n}\exp(-ax^2)\,dx=\sqrt{\frac{\pi}{a}}\frac{(2n-1)!!}{(2a)^n} \nonumber
\end{align}

\section*{Acknowledgments}
A.~W.~acknowledges support by EPSRC grant EP/L01937X/1.

\end{document}